\newtheorem{theorem}{Theorem}
\newtheorem{lemma}{Lemma}
\newtheorem{proposition}{Proposition}
\newtheorem{corollary}{Corollary}
\newtheorem{claim}{Claim}
\newcommand{\f}[2]{\frac{#1}{#2}}
\newcommand{\ga}{\gamma}
\newcommand{\Ga}{\Gamma}
\newcommand{\beq}{\begin{equation}}
\newcommand{\eeq}{\end{equation}}
\newcommand{\beqna}{\begin{eqnarray*}}
\newcommand{\eeqna}{\end{eqnarray*}}
\newcommand{\beqn}{\begin{equation*}}
\newcommand{\eeqn}{\end{equation*}}
\newcommand{\bp}{\begin{proof}}
\newcommand{\ep}{\end{proof}}
\newcommand{\bprop}{\begin{proposition}}
\newcommand{\eprop}{\end{proposition}}
\newcommand{\bt}{\begin{theorem}}
\newcommand{\et}{\end{theorem}}
\newcommand{\bex}{\begin{Example}}
\newcommand{\eex}{\end{Example}}
\newcommand{\bc}{\begin{corollary}}
\newcommand{\ec}{\end{corollary}}
\newcommand{\bcl}{\begin{claim}}
\newcommand{\ecl}{\end{claim}}
\newcommand{\bl}{\begin{lemma}}
\newcommand{\el}{\end{lemma}}
\begin{document}

\title{A class of Finite difference Methods for solving inhomogeneous damped wave equations}
 \author{Fazel Hadadifard}
 \author{Satbir Malhi}
 \author{Zhengyi Xiao}
 \address{Fazel Hadadifard, Department of Mathematics, Drexel University
 }
 \email{fh352@drexel.edu}
 
 \address{Satbir Malhi, Department of Mathematics, Saint Mary's College of California}
 \email{smalhi@stmarys-ca.edu}

 \address{Zhengyi Xiao, Department of Mathematics, Franklin \& Marshall College}
\email{zxiao@fandm.edu }

\thanks{ }

\date{\today}

\subjclass[2000]{65M06, 37N30, 65N22 }

\keywords{damped wave equation, numerical method, Pad\'e approximation, compact finite difference scheme, unconditionally stable, convergence}
\begin{abstract}
In this paper, a class of finite difference numerical techniques is presented to solve the second-order linear inhomogeneous damped wave equation. The consistency, stability, and convergences of these numerical schemes are discussed. The results obtained are compared to the exact solution, ordinary explicit, implicit finite difference methods, and the fourth-order compact method (FOCM).
The general idea of these methods is developed by using $C_0$-semigroups operator theory. We also showed that the stability region for the explicit finite difference scheme depends on the damping coefficient.

\end{abstract}

\maketitle

 \section{introduction}
The damped wave equation is an important evolution model. Physicists and engineers widely use it in describing the propagation of water waves, sound waves, electromagnetic waves, etc.  For instance, a model that describes the transverse  vibrations of a string of a finite length in the  presence of an external force proportional to the velocity satisfies the following partial differential equation (PDE)
 \begin{eqnarray}\label{eq1}
 	u_{tt}=\Delta u-\gamma(x)u_t+g(x,t),  \hskip 10pt\text{for} \hskip 10pt~~~a\leq x\leq b, ~~t\in R,
 \end{eqnarray}
 with initial conditions
 \[
 u(x,0)=\phi(x), ~~~~~~~~~u_t(0,x)=\psi(x), \hskip 10pt\text{for}~ a\leq x \leq b,
 \]
 and boundary conditions
\[
	u(a,t)=u_a(t) \hskip 10pt u(b,t)=u_b(t), \hskip 30pt~~~t\in R,
\]
where $\gamma\geq 0$ is the damping force, $u(x, t)$  is the  position of a point $x$ in the string, at instant $t$. The functions $\phi(x), \psi(x)$  and their derivatives are continuous functions of $x$ and the forcing function $g(x,t)\in L_x^1(\mathbb{R})$. The study of the numerical solution of this model will be our main focus in this article. 

In general, the damping reduces the amplitude of vibration, and therefore, it is desirable to have some amount of damping to achieve stability in the system.  One can find a detailed study  in \cite{evans10,burq2016exponential,rauch1974exponential} of the effect of damping in the long-time stability of the equation \eqref{eq1}.  Also, for practical purposes, it is important to know how much damping is needed in the system to ensure the fastest decay rate in the amplitude of the wave as time evolves. For example, in the case of the 3D tsunami wave, we would like to know the size and structure of the damping force to bring the amplitude of the tsunami to a safe level before it hits the shore (see \cite{segur2007waves} and references within). In the case of the damping terms as a function of time and space, obtaining an analytic solution is a challenging problem. There comes the numerical study to find the approximate solution to such problems.   
 In recent years, much attention has been given to studying the behaviours of the numerical solution of \eqref{eq1}; see for example \cite{smith1985numerical,christie1976finite,ozer2006one,gao2007unconditionally,larsson1991finite}.
 
 In this manuscript,  we develop a class of methods based on the properties of $C_0$-semigroups of the evolution equations, as well as the finite difference method (FD). Generally speaking, the FD methods are easy to apply to partial differential equitations, but they may not lead to optimal results depending on the type of equation. The techniques used in this article take advantage of the $C_0$-semigroup property and Pad\'e approximation, which lead to a better performance of new numerical schemes presented in this article.  
 	
At the time of writing this paper, we became aware of \cite{mohebbi2013fourth} that have a similar approach in which the authors drive a fourth-order implicit finite difference scheme to solve a second-order telegraph equation with constant coefficients. However, the author of \cite{mohebbi2013fourth}  did not consider the explicit finite difference schemes and used the higher-order approximation terms of the space derivative and time integration to attain higher-order accuracy of the numerical solution. In this manuscript, in addition to driving  a class of explicit and implicit methods, we discussed the issue of the instability of the explicit finite difference methods. Moreover, this paper explains the importance of  the non-zero damping term in the existence of the stability region as well. We have also shown that the explicit finite difference method produces  better results and costs a lot fewer calculations  in its stability region. 
 
An outline of the contents of this paper is as follows. In section \ref{sec:0}, we set our numerical schemes and derive our method. Section \ref{sec:1} is devoted to the analytical properties of the method, i.e., consistency, stability, and convergence. Finally, in section \ref{sec:2}, the numerical results of our method are compared  with some of the existing methods.

 \section{The semigroup approach}\label{sec:0}
 {\color{black} To present a more convenient form of \eqref{eq1}, we define a new vector  function
 \begin{eqnarray}
U(x, t)=(u,u_t)^{T},\ \ U_0=(\phi(x), \psi(x))^{T}.
 \end{eqnarray}
With these changes, the equation \eqref{eq1} turns into an evolution equation of first-order in time 

 \begin{eqnarray} \label{eqn2}
 U_t=\mathcal{A}U+G,
 \end{eqnarray}
where
  \begin{eqnarray*} 
 \mathcal{A}=\left(\begin{array}{cc}
 0&I\\\\\ \Delta&-\gamma(x)\end{array}\right),\ \ \ \ G(x,t)=\left(\begin{array}{c}
 0\\\\g(x,t)
 \end{array}\right),
 \end{eqnarray*}
 with initial condition
 \[
 U(x, 0)=\left(u(x,0),u_t(x,0)\right)^{T}.
 \] 
}

 The system above is defined on a   Hilbert space $\mathcal{H}=H^1[a,b]\times L^2(\mathbb{R})$.
 The  domain of $\mathcal{A}$ is  $D(\mathcal{A})=H^2[a,b]\times H^1(\mathbb{R})$. Since $-\mathcal{A}$ is a dissipative and invertible operator on a Hilbert space, it generates a $C_0$-semigroup of contractions for $t\geq 0$ by the Lumber-Phillips theorem \cite{lumer1961dissipative}. Also, note that the inclusion $D(\mathcal{A})\hookrightarrow\mathcal{H}$ is compact by the Rellich-Kondrachiv theorem. Thus, the spectrum of $\mathcal{A}$ contains only eigenvalues of finite multiplicity.
 \subsection{Discretization}
 We use the central discretization for the Laplacian operator $\Delta$ as
 \begin{eqnarray*}
 \Delta u(x,t)=\frac{u(x-h,t)-2u(x,t)+u(x+h,t)}{h^2}.
 \end{eqnarray*}

 We set the mesh points 
\[
x_i=a+ih,~~i= 0,1,2\dots N,~~~~~~~~~\text{where $h=\frac{b-a}{N}$}
\]
of the interval $[a,b]$. Then the continuous operator $\mathcal{A}$ can be approximated  by the  matrix operator
\[
\mathcal{M}_{(2N-2)}=\left[\begin{array}{cc}
0&I\\\\\frac{1}{h^2}A&-\Gamma
\end{array}
\right],
\]
where $I$ is the identity matrix of order $N-1$, and 
\vskip 5pt
{\tiny
\begin{eqnarray}\label{eqnn4}
A=\left[\begin{array}{cccccc}
-2&1&0&\cdots&0\\\\
1&-2&1&\cdots&0\\ 
0&1&-2&\ddots&0\\
\vdots& \ddots & \ddots &\ddots&\vdots\\ 
0&\cdots&0&1&-2\\
 \end{array}
\right]_{(N-1)\times (N-1)}
,\Ga=\left[\begin{array}{cccc}
\ga(x_1)&0&\cdots&0\\ \\
0&\ga(x_2)&\dots&0\\ \\
0&0&\cdots&0\\ 
\vdots& \ddots & \ddots &\vdots\\ 

0&\cdots&0&\ga(x_{N-1})\\

\end{array}
\right]_{(N-1)\times (N-1).}
\end{eqnarray}}

The discrete operator {$\mathcal{M}_{(2N-2)}$} is defined on the finite-dimensional Banach space $X_{(2N-2)}=\mathbb{C}^{(2N-2)}[a,b]$. \\
Let $V_{2N-2}(t)=\left[\begin{array}{c}
u(x_1,t), u(x_2,t)\dots u(x_{N-1},t), u_t(x_1,t), \cdots u_t(x_{N-1})
		\end{array}
		\right]^T $ be a vector which discretizes  the function $U(x, t)= (u(x, t), \partial_t u(x, t))$ over the interval $[x_1,x_{N-1}]$, then \eqref{eqn2} leads us to the following dynamical system
\begin{eqnarray}\label{eq3}
	\frac{dV_{2N-2}(t)}{dt}=\left[\begin{array}{cc}
		0&I\\\\\frac{1}{h^2}{A}&-\Gamma
		\end{array}
	\right]V_{2N-2}(t)+\left[\begin{array}{c} 0\\\\G(t)\end{array}\right]+\left[\begin{array}{c} 0\\\\\frac{1}{h^2}B(t)\end{array}\right],
	\end{eqnarray}
	where $G(t)=\left[ g(x_1,t), g(x_2,t),\dots, g(x_{N-1},t)\right]^T$, $B(t)=\left[\begin{array}{c}
		u_a(t), 0, 0, \hdots, 	0, 0, 	u_b(t)	\end{array}\right]$  and the initial condition 
\begin{eqnarray*}
	V_{2N-2}(0)=\left[\begin{array}{c}
		\phi(x_1), \phi(x_2)\dots \phi(x_{N-1}),\psi(x_1), \cdots \psi(x_{N-1})
	\end{array}
	\right]^T.
\end{eqnarray*}
We will now drop the subscript $2N-2$  and write $V_{2N-2}(x,t)$ by $V(t)$, and $\mathcal{M}_{2N-2}$ by $\mathcal{M}$ in the rest of our presentation. 

Since $\mathcal{M}$ is a bounded linear operator on a finite-dimensional space $X_{(2N-2)}\times H_0^1(\mathbb{R})$, it generates a $C_0$-semigroup for each $N$. Then, by using the $C_0$-semigroup theory of inhomogeneous evolution equations, we can construct the sequences of approximating solutions to \eqref{eq3} as
\begin{eqnarray*}
	V(t)=e^{\mathcal{M}t}V(0)+\int_{0}^{t}e^{\mathcal{M}(t-s)}F(s)~ds,
	\end{eqnarray*}
where 
\begin{eqnarray*}
	F(t)=\left[\begin{array}{c} 0\\\\G(t)\end{array}\right]+\left[\begin{array}{c} 0\\\\\frac{1}{h^2}B(t)\end{array}\right].
	\end{eqnarray*}
We replace $t$ by $t+k$ in the above equation and use the $C_0$-semigroup property, $e^{\mathcal{M}(t+k)}=e^{\mathcal{M}t}e^{\mathcal{M}k}$, we get
\begin{eqnarray*}\label{eqn4}
	V(t+k)&=&e^{\mathcal{M}(t+k)}V(0)+\int_{0}^{t+k}e^{\mathcal{M}(t+k-s)}F(s)~ds\\
	&=&e^{\mathcal{M} k}e^{\mathcal{M} t}V(0)+e^{\mathcal{M}k}\int_{0}^{t}e^{\mathcal{M}(t-s)}F(s)~ds+e^{\mathcal{M} k}\int_{t}^{t+k}e^{\mathcal{M}(t-s)}F(s)~ds\nonumber\\
		&=&e^{\mathcal{M} k}\left(V(t)-\int_{0}^{t}e^{\mathcal{M}(t-s)}F(s)~ds\right)+e^{\mathcal{M}k}\int_{0}^{t}e^{\mathcal{\mathcal{M}}(t-s)}F(s)~ds\\&+&e^{\mathcal{M}k}\int_{t}^{t+k}e^{\mathcal{M}(t-s)}F(s)~ds.
\end{eqnarray*}
Thus, 
\begin{eqnarray}\label{eqn7}
	V(t+k)&=&e^{\mathcal{M}(k)}V(t)+e^{\mathcal{M}k}\int_{t}^{t+k}e^{\mathcal{M}(t-s)}F(s)~ds.
\end{eqnarray}

To approximate the term $e^{\mathcal{M}k}$, we {\color{black}make use of the rational approximation of exponential  functions, i.e., the Pad\'{e} approximation. 
\subsection{Pad\'{e} Approximant}  The Pad\'{e} approximation is  {\color{black} a rational } approximation of a function of a given order \cite{baker1981pade}. The technique was developed around 1890 by Henri Pad\'{e}, but it goes back to  F. G. Frobenius who introduced the idea and investigated the features of rational approximations of power series. The Pad\'{e} approximation is usually superior when functions contain poles because the use of rational function allows them to be well represented. The Pad\'{e} approximation often gives a better approximation of the function than truncating its Taylor series, and it may still work where the Taylor series does not converge.

Pad\'{e} approximation gives the exponential functions  $e^\theta$ as
\begin{eqnarray*}
	e^\theta=\frac{1+a_1\theta+a_2\theta^2+\cdots+a_T\theta^T}{1+b_1\theta+b_2\theta^2+\cdots+_T\theta^S}
	+c_{S+T+1}\theta^{S+T+1}+O(\theta^{S+T+2}),
	\end{eqnarray*} 
where $C_{S+T+1}$, $a_i$'s and $b$'s are constants. The rational function 
\begin{eqnarray}\label{eqn5}
	R_{S,T}(\theta):=\frac{1+a_1\theta+a_2\theta^2+\cdots+a_T\theta^T}{1+b_1\theta+b_2\theta^2+\cdots+_T\theta^S}=\frac{P_T(\theta)}{Q_S(\theta)}
	\end{eqnarray} 
is the so-called Pad\'{e} approximation of order $(S,T)$ to $e^\theta$ with the leading error $c_{S+T+1}\theta^{S+T+1}$.
	The table below gives some Pad\'{e} approximations of the exponential function\cite{smith1985numerical}.
	
\[\begin{array}{p{2in}p{3in} p{2in} }
	\toprule
	\vskip 2pt
	\centering(S,T)&	\vskip 2pt\centering$R_{S,T}(\theta)$&	\vskip 2pt$\text{Leading error}$\vskip3pt\\
	\midrule
	\centering\vskip2pt(0,1)&\centering\vskip2pt$1+\theta$&\vskip2pt$\frac{1}{2}\theta^2$\vskip3pt\\

	\centering\vskip2pt(0,2)&\centering\vskip2pt$1+\theta+\frac{1}{2}\theta^2$&\vskip2pt$\frac{1}{6}\theta^3$\vskip3pt\\

		\centering\vskip2pt(1,0)&\centering\vskip2pt$\dfrac{1}{1-\theta}$&\vskip2pt$-\frac{1}{2}\theta^2$\vskip3pt\\

		\centering\vskip2pt(1,1)&\centering\vskip2pt$\dfrac{1+\frac{1}{2}\theta}{1-\f{1}{2}\theta}$&\vskip2pt$-\frac{1}{12}\theta^3$\vskip3pt\\
	\bottomrule
	\end{array}
		\]
Now combining \eqref{eqn4} and \eqref{eqn5}, we get
\begin{eqnarray}\label{eqn61}
	Q_S(\mathcal{M}k)	V(t+k)
	&=&P_T(\mathcal{M}k)V(t)\\&+&P_T({\mathcal{M}k})\int_{t}^{t+k}P_T({\mathcal{M}(t-s)})(Q_S({\mathcal{M}(t-s)}))^{-1}F(s)~ds.\nonumber
\end{eqnarray}
For the integration term on the right-hand side, one can use the numerical integration formula. Here, we will use the Trapezoidal approximation of integration to  get the following numerical scheme
	\begin{eqnarray}\label{eqn6}
Q_S(\mathcal{M}k)	V(t+k)
	&=&P_T(\mathcal{M}k)V(t)+\frac{k}{2}P_T(\mathcal{M}k)F(t)+\frac{k}{2}Q_S(\mathcal{M}k)F(t+k).
	\end{eqnarray}
This is the general form of our scheme, and each choice of $Q_S$ and $P_T$  produces  explicit and implicit finite difference  methods to the solution of the damped wave equation \eqref{eq1}. { Next,  we present two schemes by taking $(S, T)= (0, 1)$  and $(S, T)= (1, 1)$. Similarly, we can develop more schemes of different order by taking different values of $S$ and $T$ mentioned in the table above.}
\\
{\bf Explicit Method ($FD-(0,1)$)}: If we set $(S, T)= (0, 1)$ i.e. $Q_0(\theta)=1$ and $P_1(\theta)=1+\theta$ in \eqref{eqn6}, we will obtain the FD-(0,1)  as
{\color{black}
\begin{eqnarray}\label{6}
 \begin{cases}
	V{(t+k)}=(1+\mathcal{M}k)V{(t)}+\frac{k}{2}(I+\mathcal{M}k)F(t)+\frac{k}{2}F(t+k),\\
	V^0= [u_1(0), \cdots, u_{N-1}(0), \partial_t u_1(0), \cdots, \partial_t u_{N-1}(0)].
	 \end{cases}
\end{eqnarray}
{\bf Implicit Method ($FD-(1,1)$)}:  By a choice of  $P_1(\theta)=1+\frac{1}{2}\theta$ and  $Q_1(\theta)=1-\frac{1}{2}\theta$ in \eqref{eqn6}, we will obtain the FD-(1,1) as
{\color{black}
 \begin{eqnarray}\label{eqn7.1}
 \begin{cases}
\left(1-\frac{1}{2}\mathcal{M}k\right)V(t+k)=\left(1+\frac{1}{2}\mathcal{M}k\right)V(t)+\frac{k}{2}\left(I+\frac{1}{2}\mathcal{M}k\right)F(t)\\
\hspace{8em} + \frac{k}{2}\left(I-\frac{1}{2}\mathcal{M}k\right)F(t+k),\\
	V^0= [u_1(0), \cdots, u_{N-1}(0), \partial_t u_1(0), \cdots, \partial_t u_{N-1}(0)].
\end{cases}
\end{eqnarray}

In the case of the implicit method, we need to solve a more extensive system of equations in each time step due to the implicit nature of the system}. However,  the analysis and numerical results suggest that the implicit scheme gives us an accurate approximation and, more importantly, an unconditionally stable scheme.

\section{Consistency, Stability and Convergence}\label{sec:1}
In this section, we will investigate the analytical properties of our numerical schemes \eqref{6} and \eqref{eqn7.1}. We will prove that the numerical methods \eqref{6} and \eqref{eqn7.1} are consistent, stable,  and hence convergent. We will use the direct analysis to prove the consistency, the matrix method  to prove the stability,
 and the Lax-equivalence theorem to prove the convergence of our numerical schemes. 
\subsection{Consistency}
Given a partial differential equation $L u = f$ and a finite difference scheme,
$F_{h,k}v = f$, we say that the finite difference scheme is consistent with the
partial differential equation if for any smooth function $\phi(x,t)$,
\[
L\phi-F_{h,k}\phi\to0~~\text{as}~~h,k\to0,
\]
or in other words, the local truncation goes to zero as the mesh size $h$ and $k$  tends to zero. 
The  partial differential equation
	\begin{eqnarray*} 
U_t-\left(\begin{array}{cc}
0&I\\\\\ \Delta&-\gamma(x)\end{array}\right)U-\left(\begin{array}{c}
0\\\\g(x,t)
\end{array}\right)=0,
\end{eqnarray*}
 is approximated at the point $(x_i,t)$ by the $n^{th}$ row of the following difference equations
\begin{eqnarray*}\frac{1}{k}\left(Q_S({\mathcal{M}k})V(t+k)-P_T({\mathcal{M}k}) V(t)\right)-\frac{1}{2}P_T({\mathcal{M}k})F(t)-\frac{1}{2}Q_S({\mathcal{M}k})F(t+k)=0\nonumber,
\end{eqnarray*}
for $n=1,2, \cdots, (2N-2).$\\
Then the local truncation error $T_{i, t}(U)$ is defined as the $n^{th}$ row of\begin{eqnarray*}
 \frac{1}{k}\left(Q_S({\mathcal{M}k})U(t+k)-P_T({\mathcal{M}k})U(t)\right)-\frac{1}{2}P_T({\mathcal{M}k})F(t)-\frac{1}{2}Q_S({\mathcal{M}k})F(t+k),
\end{eqnarray*}
for $n=1, \cdots, (2N-2)$.\\
The truncated error depends on the choice of $Q_S$ and $P_T$. Therefore, we should consider them case by case. Here we consider  $FD-(0, 1)$ and $FD-(1,1)$. The remaining cases follow  the same path. 
\subsubsection{$FD-(0,1)$} 
The local truncation error $T^{0,1}_{i, t}(U)$ of the explicit $FD-(0,1)$ is defined as the  $n^{th}$ row of 
\begin{eqnarray*}
		&& \frac{1}{k}\left(U(t+k)-(I+{\mathcal{M}k})U(t)\right)-\frac{1}{2}(I+{\mathcal{M}k})F(t)-\frac{1}{2}F(t+k)
			\end{eqnarray*}
	for 	$n= 1, 2\cdots, (2N-2)$.\\
	Thus for $i= 1, 2, \cdots N-1$, we get the following system of $(2N-2)$ equations
	\begin{eqnarray*}
		T^{0,1}_{i, t}(U)=	\frac{1}{k}\left(u(x_i, t+k)-u(x_i,t)\right)-u_t(x_i,t)-\frac{k}{2}g(x_i,t),
	\end{eqnarray*}
and 
\begin{eqnarray*}
	T^{0,1}_{i+N-1, t}(U)=\frac{1}{k}u_t(x_i,t+k)-\frac{1}{h^2}(u(x_i-h, t)-2u(x_i,t)+u(x_i+h,t))\\\\-{\frac{1}{k}(1-k\ga(x_i))u_t(x_i,t)}-~\frac{(1-\ga(x_i)k)}{2}g(x_i,t)-\frac{1}{2}g(x_i,t+k).
		\end{eqnarray*}
	By Taylor series expansion, we get 
	\begin{eqnarray*}
		T^{0,1}_{i, t}(U)=\frac{k}{2!}u_{tt}(x_i,t)+\frac{k^2}{3!}u_{ttt}(x_i,t)+\cdots-\frac{k}{2}g(x_i,t)
\end{eqnarray*}
and
\begin{eqnarray*}
			T^{0,1}_{i+N-1, t}(U)&=&	\left(u_{tt}(x_i, t)-u_{xx}(x_i, t)+\ga(x_i)u_t(x_i, t)-g(x_i, t)\right)\\\\
		&&+\frac{k}{2!}u_{ttt}(x_i, t)+O(k^2)-\frac{2h^2}{4!}u_{xxxx}(x_i, t)+O(h^4)+\frac{\ga(x_i)k}{2}g(x_i, t)\\\\
		&&-\frac{k}{2}g_{t}(x_i, t)+O(k^2).
\end{eqnarray*}
for~ $i=1,2,\cdots, (N-1)$.
\\
{By \eqref{eq1}, the last ($N-1$), equations can be written as }
\begin{eqnarray*}
	T^{0,1}_{i, t}(U)
&=&\frac{k}{2!}u_{ttt}(x_i, t)+O(k^2)-\frac{2h^2}{4!}u_{xxxx}(x_i, t)+O(h^4)+\frac{\ga(x_i)k}{2}g(x_i, t)\\\\
		&-&\frac{k}{2}g_{t}(x_i, t)+O(k^2).
\end{eqnarray*}
We observe as  $h$ and $k$ go to zero, the truncation error $T_{i, t}(U)\to0$. Hence, the numerical scheme is consistent. 
. 
\subsubsection{$FD-(1,1)$ } The local truncation error $T^{1,1}_{i, t}(U)$ of the explicit $FD-(1,1)$ is defined as the $n^{th}$ row of 
\begin{eqnarray*}
 \frac{1}{k}\left(\left(I-\frac{1}{2}{\mathcal{M}k}\right)U(t+k)-\left(I+\frac{1}{2}{\mathcal{M}k}\right)U(t)\right)-\frac{1}{2}\left(I+\frac{1}{2}{\mathcal{M}k}\right)F(t)-\frac{1}{2}\left(I-\frac{1}{2}{\mathcal{M}k}\right)F(t+k)
	\end{eqnarray*}
for $n=1,2,\cdots,(2N-2)$.\\
Thus for $i=1, 2\cdots, N-1$, we get the following system of $(2N-2)$ equations
	\begin{eqnarray*}
		T^{1,1}_{i, t}(U)=	\frac{1}{k}\left(u(x_i, t+k)-u(x_i, t)\right)-u_t(x_i, t)-\frac{k}{4}\left(g(x_i, t+k)-g(x_i, t)\right),
\end{eqnarray*}
and
{\begin{eqnarray*}
		T^{1,1}_{i+N-1, t}(U)&=&	\left[-\frac{1}{2h^2}\left(u(x_i-h, t+k)-2u(x_i, t+k)+u(x_i+h, t+k)\right)+\frac{1}{k}\left(1+\frac{\gamma k}{2}\right)u_t(x_i, t+k)\right]\\
&&	-\left[\frac{1}{2h^2}\left(u(x_i-h, t)-2u(x_i, t)+u(x_i+h, t)\right)+\frac{1}{k}\left(1-\frac{\gamma k}{2}\right)u_t(x_i, t)\right]\\
&&-\frac{1}{2}\left[(1+\frac{\ga k}{2})g(x_i, t)+(1-\frac{\ga k}{2})g(x_i, t+k)\right].
\end{eqnarray*}}
By Taylor series expansion, we get 
	\begin{eqnarray*}
	T^{1,1}_{i, t}(U)=	\frac{k}{2}u_{tt}(x_i, t)-\frac{k^2}{4}g_t(x_i, t)+O(k^3),
\end{eqnarray*}
and 
	\begin{eqnarray*}
	T^{1,1}_{i+N-1, t}(U)&=&	\left(u_{tt}(x_i, t)-u_{xx}(x_i, t)+\ga(x_i)u_t(x_i, t)-g(x_i, t)\right)\\
&+&	\frac{k}{2}u_{ttt}(x_i, t)+O(k^2)
		-\frac{k}{2}u_{xxt}(x_i, t)+{O}(k^2)
		-\frac{h^2}{2}u_{xxxx}(x_i, t)+O(h^4)\\
		&-&\frac{kh^2}{6}u_{xxxxt}+h^2O(k^2)
		-\frac{k}{2}g_t(x_i, t)+O(k^2)
		+\frac{k^3\ga(x_i)}{4}g_t(x_i, t)+O(k^3),
	\end{eqnarray*}
 for \text{ $i=1,2,\cdots, (N-1)$}.\\
 {By \eqref{eq1}, the last ($N-1$), equations can be written as }
 \begin{eqnarray*}
 	T^{1,1}_{i, t}(U)
 	&=&	\frac{k}{2}u_{ttt}(x_i, t)+O(k^2)
 	-\frac{k}{2}u_{xxt}(x_i, t)+{O}(k^2)
 	-\frac{h^2}{2}u_{xxxx}(x_i, t)+O(h^4)\\
 	&-&\frac{kh^2}{6}u_{xxxxt}+h^2O(k^2)
 	-\frac{k}{2}g_t(x_i, t)+O(k^2)
 	+\frac{k^3\ga(x_i)}{4}g_t(x_i, t)+O(k^3).
 \end{eqnarray*}
 As $h$ and $k$ go to zero, the truncation error $T_{i, t}(U)\to0$. Hence, the numerical scheme is consistent. 
\subsection{Stability} { To prove the stability of our numerical schemes, we show that there exists a region $\Lambda$ so that for every $h,k\in \Lambda$, all the eigenvalues of the amplification matrix related to the numerical schemes lie in or on the unit circle.}

%

\begin{proposition}\label{thm1.1} The explicit FD-(0,1) approximation defined in \eqref{eqn6} is stable for  $k<\frac{2}{\gamma_*}$ and $\frac{\sqrt{k}}{h}<\frac{\sqrt{ \ga_*}}{2}$, where $\ga_*=\max_{x\in[a,b]}\gamma(x)$.
\end{proposition}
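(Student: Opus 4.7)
The plan is to apply the matrix method: write the update \eqref{6} as $V^{n+1}=CV^{n}+R^{n}$ with amplification matrix $C=I+k\mathcal{M}$ and a bounded source term $R^{n}$. By the Lax--Richtmyer framework, stability is equivalent to the power-boundedness of $C$, which will follow from the spectral condition $\rho(C)\le 1$. Thus it suffices to verify that every eigenvalue $\lambda$ of $\mathcal{M}$ lies in the closed disk $|1+k\lambda|\le 1$ under the two stated hypotheses on $k$ and $h$.

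The first reduction uses the block structure of $\mathcal{M}$. For an eigenvector $(u,v)^{T}$, the top block row gives $v=\lambda u$, and the bottom block row then yields the quadratic pencil
\[
\Bigl(\lambda^{2}I+\lambda\Gamma-\tfrac{1}{h^{2}}A\Bigr)u=0.
\]
Pairing with $u$ and dividing by $\|u\|^{2}$ produces the scalar quadratic $\lambda^{2}+\hat{\gamma}\lambda+\hat{\mu}=0$ with real Rayleigh quotients $\hat{\gamma}=\langle\Gamma u,u\rangle/\|u\|^{2}\in(0,\gamma_{*}]$ and $\hat{\mu}=-\langle h^{-2}Au,u\rangle/\|u\|^{2}\in[0,4/h^{2}]$, using the classical spectrum of the symmetric tridiagonal discrete Laplacian $A$ and the positivity of $\Gamma$.

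The analytic core is then to verify $|1+k\lambda|\le 1$ in the two subcases determined by the sign of the discriminant $\hat{\gamma}^{2}-4\hat{\mu}$. In the real subcase both roots are negative with $|\lambda|\le\hat{\gamma}\le\gamma_{*}$, so the condition reduces to $k|\lambda|\le 2$ and is supplied by the hypothesis $k<2/\gamma_{*}$. In the complex subcase $\mathrm{Re}(\lambda)=-\hat{\gamma}/2$ and $|\lambda|^{2}=\hat{\mu}$, so an expansion gives
\[
|1+k\lambda|^{2}=1-k\hat{\gamma}+k^{2}\hat{\mu},
\]
which is at most $1$ precisely when $k\hat{\mu}\le\hat{\gamma}$. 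The uniform bound $\hat{\mu}\le 4/h^{2}$ together with $\sqrt{k}/h<\sqrt{\gamma_{*}}/2$, i.e.\ $4k/h^{2}<\gamma_{*}$, should close this step once the link between $\hat{\gamma}$ and $\hat{\mu}$ on the same eigenvector is calibrated against $\gamma_{*}$.

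The principal obstacle is exactly this calibration in the variable-coefficient setting, since $\Gamma$ and $A$ do not share an eigenbasis and the two Rayleigh quotients are not independent mode by mode. I would circumvent it either by a symmetrization of the pencil that recasts it through a matrix whose spectrum is jointly controlled by $\gamma_{*}$ and $4/h^{2}$, or by a Gershgorin-disk localization of the spectrum of $\mathcal{M}$ — whose non-trivial rows have diagonal entry $-\gamma(x_{i})$ and off-diagonal row sum bounded by $4/h^{2}$ — combined with the worst-case choice $\gamma_{*}$. Once that coupling is established, the two scalar inequalities above deliver $\rho(I+k\mathcal{M})\le 1$ and hence stability.
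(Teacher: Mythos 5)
Your overall strategy coincides with the paper's: reduce stability to the spectral condition $\rho(I+k\mathcal{M})\le 1$, convert each eigenvalue of $\mathcal{M}$ into a root of a real scalar quadratic $\lambda^{2}+\hat{\gamma}\lambda+\hat{\mu}=0$, and then check the real-root and complex-root regimes, recovering the condition $k\gamma_*<2$ from the real case and $4k/h^{2}<\gamma_*$ from the complex case. The difference is in how the scalar quadratic is produced and tested: the paper pairs $\gamma(x_n)$ with the $n$-th eigenvalue $-4\sin^{2}(n\pi/2N)$ of $A$ and applies the Jury--Samuelson root-location lemma to the resulting polynomial in $\mu=1+k\lambda$, whereas you use Rayleigh quotients of $\Gamma$ and $-h^{-2}A$ against the same eigenvector and verify $|1+k\lambda|\le1$ by a direct discriminant analysis. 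In the constant-coefficient case these are equivalent and both of your scalar inequalities close correctly.

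As written, however, your proof has a genuine gap, and it is precisely the one you flag: in the complex-root case you need $k\hat{\mu}\le\hat{\gamma}$ on each eigenvector, but the hypothesis $4k/h^{2}<\gamma_*$ only bounds $k\hat{\mu}$ by $\max\gamma$, while $\hat{\gamma}=\langle\Gamma u,u\rangle/\|u\|^{2}$ can be as small as $\min\gamma$. Neither proposed rescue closes this. Gershgorin applied to $\mathcal{M}$ cannot work: the top block rows give disks centered at $0$ of radius $1$ and the bottom block rows give disks of radius of order $1/h^{2}$ centered at $-\gamma(x_i)$, none of which confines the spectrum to the region $|1+k\lambda|\le1$. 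A symmetrization of the pencil still has to relate the two Rayleigh quotients on the same vector, which is the original difficulty. For what it is worth, the paper's own proof sidesteps the issue by attaching $\gamma(x_n)$ to the $n$-th sine mode, i.e., by implicitly treating $\Gamma$ and $A$ as simultaneously diagonalizable, which holds only for constant $\gamma$. The honest ways to finish your argument are either to restrict to constant damping, where $\hat{\gamma}=\gamma_*$ and both of your inequalities close immediately, or to replace $\gamma_*$ by $\gamma_{\min}=\min_{x\in[a,b]}\gamma(x)$ in the mesh condition, since then $k\hat{\mu}\le 4k/h^{2}\le\gamma_{\min}\le\hat{\gamma}$ uniformly over eigenvectors. (Both you and the paper also pass from $\rho\le1$ to power-boundedness without addressing possible defective unimodular eigenvalues, but that is a shared, standard elision rather than a flaw specific to your write-up.)
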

\noindent The following  lemma will be used to prove Proposition \ref{thm1.1}.
\begin{lemma}\label{lem1.1}
	Let $p(x)=ax^2+bx+c$ be a polynomial function with $a>0$, then necessary and sufficient conditions for the polynomial $p(x)$ to have the modulus of its roots less or equal to 1  are
	
	\begin{itemize}
		\item[(i)] $|c|<a$
		\item [(ii)] $p(1)>0$ and $p(-1)>0$.
			\end{itemize}
\end{lemma}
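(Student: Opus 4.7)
The plan is to work with the factorization $p(x) = a(x - r_1)(x - r_2)$, where $r_1, r_2$ are the (possibly complex) roots of $p$. Vieta's formulas give $r_1 r_2 = c/a$, and direct evaluation yields $p(1) = a(1 - r_1)(1 - r_2)$ and $p(-1) = a(1 + r_1)(1 + r_2)$. These two identities, together with the product relation, will let me translate the root-location condition $|r_j| \le 1$ into the algebraic conditions (i) and (ii) on the coefficients and the boundary values.

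For necessity, I would assume $|r_1|, |r_2| \le 1$ and observe $|c|/a = |r_1 r_2| = |r_1||r_2| \le 1$, which delivers (i). For (ii), in the real-root case both factors of $p(1) = a(1 - r_1)(1 - r_2)$ are non-negative (and similarly for $p(-1)$), while in the complex-conjugate case $r_2 = \overline{r_1}$ one has $p(1) = a|1 - r_1|^2 \ge 0$ and $p(-1) = a|1 + r_1|^2 \ge 0$.

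For sufficiency, I would split on the nature of the roots. If $r_1, r_2$ are complex conjugates, then $c/a = |r_1|^2 \ge 0$, so (i) gives $|r_1|^2 < 1$ and the conclusion follows at once. If the roots are real, then (ii) says $(1 - r_1)(1 - r_2) > 0$ and $(1 + r_1)(1 + r_2) > 0$, each asserting that the two factors share a sign. That leaves four logical configurations for $(r_1, r_2)$: both in $(-1,1)$, both in $(1, \infty)$, both in $(-\infty, -1)$, or one in $(1,\infty)$ and one in $(-\infty,-1)$. The fourth configuration violates the same-sign requirement, and the middle two both force $r_1 r_2 > 1$, contradicting (i). Hence both roots must lie in $(-1, 1)$.

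The main obstacle is the case analysis for real roots, specifically verifying that (i) is precisely what rules out the configurations in which the roots leave the unit interval while the signs of $p(\pm 1)$ stay positive. A minor wrinkle worth noting: the hypotheses are stated strictly ($|c| < a$, $p(\pm 1) > 0$), and the argument actually produces the strict conclusion $|r_j| < 1$; the stated bound $|r_j| \le 1$ is an immediate weakening, and the non-strict analogue of the criterion (with $|c|\le a$ and $p(\pm 1)\ge 0$) can be handled by the same factorization with $\le$ in place of $<$ throughout.
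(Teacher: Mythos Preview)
The paper does not supply its own proof of this lemma; it simply refers the reader to \cite{jury1963roots,samuelson1941conditions}. Your self-contained argument via the factorization $p(x)=a(x-r_1)(x-r_2)$ and Vieta's formulas is the standard elementary route and is essentially correct. In particular, the sufficiency case analysis for real roots is complete once one notes that the two sign conditions force both roots onto the same side of $1$ and the same side of $-1$, leaving only the three consistent configurations (both in $(-1,1)$, both $>1$, both $<-1$); the latter two are then eliminated by $|r_1r_2|=|c|/a<1$.

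You are also right to flag the strict/non-strict mismatch in the statement. With the hypotheses written as strict inequalities, your argument in fact yields the strict conclusion $|r_j|<1$; conversely, the non-strict premise $|r_j|\le 1$ only produces $|c|\le a$ and $p(\pm 1)\ge 0$, so the necessity direction as literally phrased in the lemma fails (e.g.\ $p(x)=x^2-1$). This is a defect of the statement, not of your proof, and the strict version is exactly what the Jury--Schur--Cohn criterion in the cited references asserts and what is actually used downstream in Proposition~\ref{thm1.1}.
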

\noindent One can find the proof of the above lemma in \cite{jury1963roots,samuelson1941conditions}. \\
{\it \bf Proof of proposition \ref{thm1.1}.} 
The eigenvalues of the amplification matrix $I+k\mathcal{M}$  are the roots of the following quadratics equation
	\begin{eqnarray*}
		\lambda^2+(-2+\gamma(x_n)k)	\lambda+1-k\gamma(x_n)+4r^2\sin^2\left(\frac{n\pi}{2N}\right)=0,\ \ n=1, \cdots, (N-1),
	\end{eqnarray*}
	where $r=k/h$.\\
	Note for each $n$, there are two roots of the above polynomial, and hence we have $2N-2$ eigenvalues for the matrix $I+k
	\mathcal{M}$.
\\	
	Next, in order to satisfy the conditions $(i)$ and $(ii)$ of lemma \eqref{lem1.1}, we impose restrictions on $\ga_*$ and $r$. Indeed, the assumption $(i)$ is satisfied if
\begin{eqnarray*}
  -1<1-k\gamma(x_n)+4r^2\sin^2\left(\frac{n\pi}{2N}\right)<1,~~~n=1,2,\cdots, N-1.
  \end{eqnarray*}
The right-hand inequality gives us 
\begin{eqnarray*}
	&&4r^2\sin^2\left(\frac{n\pi}{2N}\right)<k\gamma(x_n)\leq k \ga_*,\\\\
	&&r^2<\frac{k \ga_*}{4\sin^2\left(\frac{n\pi}{2N}\right)}.
\end{eqnarray*}
Thus, $\frac{\sqrt{k}}{h}<\frac{\sqrt{\ga_*}}{2}$. 
\\
Now, the first part of the assumption (ii) is satisfied if 
\begin{eqnarray*}
	p(1)=4r^2\sin^2\left(\frac{n\pi}{2N}\right)>0,
\end{eqnarray*}
which is true as long as $r>0$. 
\\
Now, the second part of assumption (ii) is satisfied if 
\begin{eqnarray*}
	p(-1)=4-2k\gamma(x_n)+4r^2\sin^2\left(\frac{n\pi}{2N}\right)>0,
\end{eqnarray*}
which is true  if 
 \begin{eqnarray*}
k\gamma_* <2.
 \end{eqnarray*}
Hence  the second part of the assumption (ii) of lemma \eqref{lem1.1}  is satisfied for $k <\frac{2}{\ga_*}$.

Proposition \eqref{thm1.1} tells us that the damping term plays an important role in the stability of the explicit method \eqref{eqn6}. The finite difference scheme \eqref{eqn6} will be unstable for any values of $h$ and $k$ if the damping term $\gamma(x)$ is identically zero or $h$ and $k$  are out of the required bounds of  the proposition \eqref{thm1.1}.
\begin{proposition}
 The implicit FD-(1,1) approximation defined by \eqref{eqn7.1} is unconditionally stable.
\end{proposition}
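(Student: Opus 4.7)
The plan is to mimic the proof of Proposition~\ref{thm1.1}: write down the amplification matrix associated with \eqref{eqn7.1}, reduce the stability question to the location of the roots of a family of mode-indexed quadratic polynomials, and then check the two conditions of Lemma~\ref{lem1.1}. The decisive feature will be that all estimates turn out to be free of $h$ and $k$. Dropping the forcing, which is irrelevant for the stability analysis, the recurrence \eqref{eqn7.1} reads
\[
V^{n+1}=G\,V^{n},\qquad G:=\bigl(I-\tfrac{k}{2}\mathcal{M}\bigr)^{-1}\bigl(I+\tfrac{k}{2}\mathcal{M}\bigr),
\]
so $G$ is the Pad\'e-$(1,1)$ (Cayley) transform of $\tfrac{k}{2}\mathcal{M}$. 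If $\mu$ is an eigenvalue of $\mathcal{M}$, the corresponding eigenvalue of $G$ is $\la=(1+k\mu/2)/(1-k\mu/2)$, equivalently $\mu=2(\la-1)/\bigl(k(\la+1)\bigr)$.

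Next I would carry out the same per-mode reduction that underlies the proof of Proposition~\ref{thm1.1}: the eigenvalues $\mu$ of $\mathcal{M}$ satisfy
\[
\mu^2+\ga(x_n)\mu+\tfrac{4}{h^2}\sin^2\!\bigl(\tfrac{n\pi}{2N}\bigr)=0,\qquad n=1,\dots,N-1.
\]
Substituting $\mu=2(\la-1)/(k(\la+1))$ and clearing denominators by $\tfrac12 k^2(\la+1)^2$ produces, for each $n$, a quadratic $p_n(\la)=a_n\la^2+b_n\la+c_n$ whose coefficients I expect to work out to
\[
a_n=2+k\ga(x_n)+2r^2\sin^2\theta_n,\;\; b_n=-4+4r^2\sin^2\theta_n,\;\; c_n=2-k\ga(x_n)+2r^2\sin^2\theta_n,
\]
where $r=k/h$ and $\theta_n=\tfrac{n\pi}{2N}$. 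Since $\ga\ge 0$, the leading coefficient satisfies $a_n>0$, so Lemma~\ref{lem1.1} is applicable.

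Verification of the two conditions is then immediate and, crucially, independent of $h,k$. For condition~(i), $a_n-c_n=2k\ga(x_n)\ge 0$ and $a_n+c_n=4+4r^2\sin^2\theta_n>0$ together give $|c_n|\le a_n$. For condition~(ii), a brief expansion yields $p_n(1)=a_n+b_n+c_n=8r^2\sin^2\theta_n\ge 0$ and $p_n(-1)=a_n-b_n+c_n=8>0$. Thus every root of every $p_n$ lies in the closed unit disk, so the spectral radius of $G$ is at most $1$ for every $h,k>0$, which is exactly the desired unconditional stability.

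The main subtlety I anticipate is the handling of the equality cases: whenever $\ga(x_n)=0$ one has $|c_n|=a_n$, so the mode in question contributes a root on the unit circle rather than strictly inside, and Lemma~\ref{lem1.1} will need to be read with non-strict inequalities at the boundary. This is the classical von~Neumann borderline situation and is compatible with stability because the two roots of each such $p_n$ remain simple, so no Jordan block develops in $G$. Conceptually, the entire argument is the discrete shadow of the fact that the Cayley/Pad\'e-$(1,1)$ map sends the closed left half-plane onto the closed unit disk, and that the dissipative structure of $\mathcal{A}$ (Lumer-Phillips, as in Section~\ref{sec:0}) is inherited by its finite-difference discretization $\mathcal{M}$; the Jury-Samuelson computation above is the concrete, mode-by-mode realization of this general principle.
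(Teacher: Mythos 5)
Your argument is correct, and it arrives at the same amplification matrix $G=(I-\tfrac{k}{2}\mathcal{M})^{-1}(I+\tfrac{k}{2}\mathcal{M})$ and the same per-mode reduction as the paper, but the decisive step is carried out by a different route. The paper writes down the eigenvalues of $\mathcal{M}$ explicitly, $\lambda_n^{\pm}=-\tfrac{\gamma(x_n)}{2}\pm\tfrac12\sqrt{\gamma(x_n)^2-\tfrac{16}{h^2}\sin^2(\tfrac{n\pi}{2N})}$, observes that they lie in the closed left half-plane, and concludes directly that their Cayley images $\mu_n^{\pm}=(1+\tfrac{k}{2}\lambda_n^{\pm})/(1-\tfrac{k}{2}\lambda_n^{\pm})$ satisfy $|\mu_n^{\pm}|\le 1$; you instead substitute the Cayley transform into the characteristic equation of each mode and apply the Jury--Samuelson criterion (Lemma~\ref{lem1.1}) to the resulting quadratic $a_n\la^2+b_n\la+c_n$. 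Your coefficients check out ($a_n-c_n=2k\ga(x_n)\ge 0$, $a_n+c_n=4+4r^2\sin^2\theta_n$, $p_n(1)=8r^2\sin^2\theta_n$, $p_n(-1)=8$), so both proofs are valid. The paper's version is shorter and makes the underlying mechanism (dissipativity of $\mathcal{M}$ plus the fact that the $(1,1)$-Pad\'e map sends the closed left half-plane into the closed unit disk) completely transparent; your version buys uniformity with the proof of Proposition~\ref{thm1.1}, avoids manipulating the square root, and is more honest about the borderline case $\ga(x_n)=0$, where condition~(i) of Lemma~\ref{lem1.1} holds only with equality and you correctly fall back on simplicity of the unimodular roots (the discriminant $-64\,r^2\sin^2\theta_n$ is nonzero) to rule out Jordan blocks --- a point the paper's ``clearly'' glosses over. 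Note that both arguments share the same implicit assumption, namely that the spectrum of $\mathcal{M}$ decouples mode by mode, which is literal only when $\Gamma$ commutes with $A$ (e.g.\ constant damping); since you inherit this from the paper rather than introduce it, it is not a gap specific to your proposal.
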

\begin{proof} The eigenvalues of the matrix $\mathcal{M}$ are given by 
	\[
\lambda_n^{\pm}=-\frac{\gamma(x_n)}{2}	\pm\frac{1}{2}\sqrt{\gamma(x_n)^2- \frac{16}{h^2} \sin^2(\frac{n\pi}{2N})},~~~~~~n=1, \cdots, N-1.
	\]
	Then, by using  functional calculus, the eigenvalues $\mu_n^{\pm}$ of the matrix $(I-\frac{1}{2}k\mathcal{M})^{-1}((I+\frac{1}{2}k\mathcal{M}))$ are given by
		\[
\mu_n^{\pm}=	\frac{1+\frac{k}{2}\lambda_n^{\pm}}{1-\frac{k}{2}\lambda_n^{\pm}},\ \ n=1, \cdots,N-1.
	\]
	{Also, we have $Re(\lambda_n)\leq 0$ because $\gamma\geq 0$. Thus, for any values of $n, h, k $, and $\gamma(x_n)$, we get $|\mu_n^{\pm}|\leq 1$}. Hence, the implicit method \eqref{eqn7.1} is unconditionally stable.
	\end{proof}


A direct application of the  Lax Equivalence Theorem  \cite{lax1956survey,richtmyer1994difference} leads to the convergence of our models. 
\begin{corollary}
	The finite difference explicit $FD-(0,1)$ of \eqref{eqn6} and implicit $FD-(1,1)$ of \eqref{eqn7.1} are convergent. 
\end{corollary}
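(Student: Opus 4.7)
The plan is to invoke the Lax Equivalence Theorem directly, since the three hypotheses it requires have already been verified earlier in the paper. Recall the theorem states that for a consistent finite difference approximation to a well-posed linear initial value problem, stability is necessary and sufficient for convergence. So the task reduces to checking (a) well-posedness of the continuous problem, (b) consistency of the scheme, and (c) stability of the scheme, and then citing the theorem.

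First, I would point out that the continuous initial value problem is well-posed. This follows from the semigroup argument in Section~\ref{sec:0}: the operator $\mathcal{A}$ (with damping term $-\gamma(x)$) is dissipative and invertible on $\mathcal{H}=L^2[a,b]\times H^1_0(\mathbb{R})$, so by the Lumer--Phillips theorem it generates a $C_0$-semigroup of contractions. Hence the abstract evolution equation \eqref{eqn2}, and therefore the original damped wave equation \eqref{eq1}, is well-posed in the sense required by the Lax framework.

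Next, I would combine the two results already proved in this section. Consistency of both FD-(0,1) and FD-(1,1) was established in Sections~3.1.1 and 3.1.2 by Taylor expansion of the local truncation error, which tends to zero as $h,k\to 0$. Stability was then shown for FD-(0,1) in Proposition~\ref{thm1.1} under the conditions $k<2/\gamma_*$ and $\sqrt{k}/h<\sqrt{\gamma_*}/2$, and for FD-(1,1) unconditionally in the second proposition, by verifying in each case that the spectrum of the amplification matrix lies in the closed unit disk.

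The final step is simply to quote the Lax Equivalence Theorem \cite{lax1956survey,richtmyer1994difference}: consistency plus stability implies convergence for any well-posed linear Cauchy problem. Applying it to FD-(0,1) within the stability region, and to FD-(1,1) for arbitrary $(h,k)$, yields the corollary. There is no genuine obstacle here; the only subtlety worth flagging explicitly is that the convergence claim for FD-(0,1) should be understood as convergence for $(h,k)$ restricted to the stability region identified in Proposition~\ref{thm1.1}, whereas FD-(1,1) converges unconditionally as $h,k\to 0$.
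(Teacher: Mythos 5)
Your proposal matches the paper's argument exactly: the paper likewise concludes convergence by a direct application of the Lax Equivalence Theorem, relying on the consistency and stability results already established in Section~\ref{sec:1}. Your additional remarks on well-posedness via the Lumer--Phillips theorem and on restricting FD-(0,1) to its stability region are correct and in fact make the application of the theorem more explicit than the paper does.
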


\section{Performance of Numerical schemes}\label{sec:2}
In this section, we will see the  performance  of each finite difference scheme  on a sample problem.\\
{\bf Sample Problem:} We consider the following damped wave equation
\begin{eqnarray*}
	u_{tt}=u_{xx}-2u_t,
	\end{eqnarray*}
over the region $\Omega=[0\leq x\leq \pi]\times(t>0)$ with initial conditions 
\[
u(x,0)=\sin(x), \hskip 30pt u_t(x,0)=-\sin(x),
\]
and boundary conditions
\[
u(0,t)=0=u(\pi,t),\;\;\text{for}~  t>0.
\]
 The exact solution of the above problem is $u(x,t)=e^{-t}\sin(x)$.\\
\begin{figure}[H]
	\includegraphics[width=.4\textwidth]{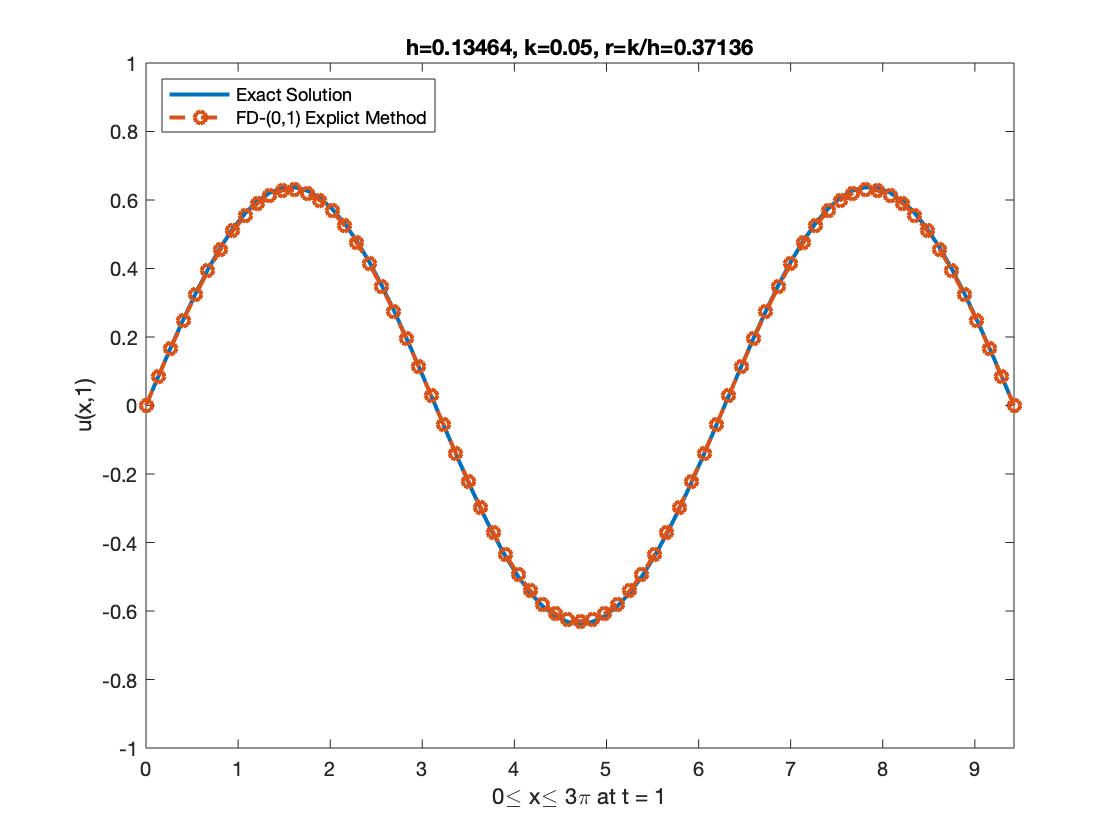}
	\caption{ The approximate solution given by explicit FD-(0,1) of \eqref{6} at $t=1$  with $k=0.05$, $h=0.13464$.}
	\label{fig1}
\end{figure}
\begin{figure}[H]
	\includegraphics[width=.4\textwidth]{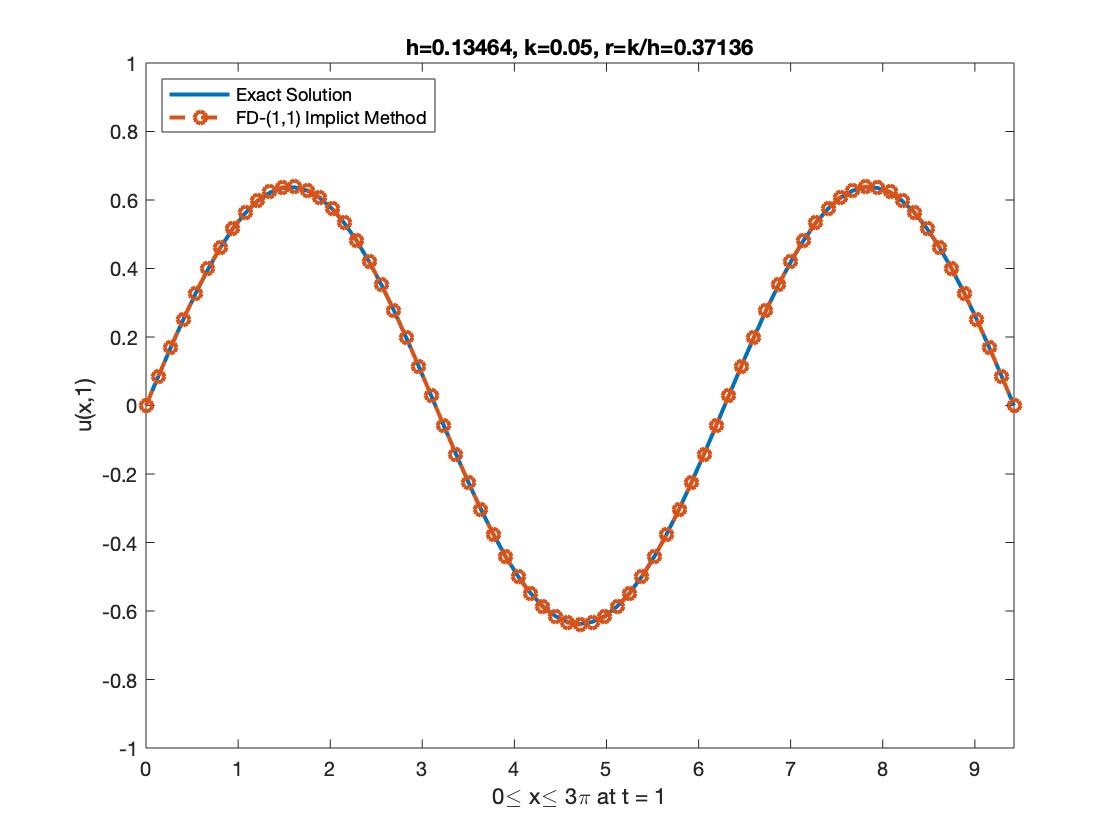}
	\caption{ The approximate solution given by implicit FD-(1,1) of \eqref{eqn7.1} at $t=1$  with $k=0.05$, $h=0.13464$. }
	\label{fig2}
\end{figure}
FIGURE \eqref{fig1} and \eqref{fig2} show the  numerical solutions using finite difference methods \eqref{eqn6} and \eqref{eqn7.1} at $t=1$. From the obtained numerical results, we can conclude that the numerical solutions are in good agreement with the exact solution. 
\subsection{Comparison with other methods}
In this section,  we  compare our result with the ordinary explicit and implicit finite difference methods mentioned  below. We also compare our result with the FOCM method of \cite{hussain2012fourth}. We take the same test example mentioned above for this comparison. \\
{ \bf Ordinary Explicit Finite Difference Scheme (OEFD):} The ordinary explicit finite difference scheme  in the matrix form is 
\begin{eqnarray}\label{eqn8}
(1+\frac{\gamma k}{2})u(t+k)=(2I-r^2A)u(t)+\left(\frac{\gamma k}{2}-1\right)u^{n-1}+r^2B(t),
\end{eqnarray}
where $r=k/h$,  $B(t)=\left[\begin{array}{c}
		u_a(t), 0, 0, \hdots, 	0, 0, 	u_b(t)	\end{array}\right]$, and the matrix $A$  is defined in equation \eqref{eqnn4}.\\
{\bf Ordinary Implicit Finite Difference Scheme(OIFD):} The ordinary implicit finite difference scheme  in the matrix form is

\begin{eqnarray}\label{eqn9}
\left(1+\frac{\gamma(x_n)k}{2}-\frac{r^2}{2}A\right)u(t+k)=\left(2+\frac{r^2}{2}A\right)u(t)+\left(\frac{\gamma(x_n)k}{2}-1\right)u(t-k)\\+\frac{r^2}{2}\left(B(t+k)+B(t)\right),\nonumber
\end{eqnarray}
where $r=k/h$,  $B(t)=\left[\begin{array}{c}
		u_a(t), 0, 0, \hdots, 	0, 0, 	u_b(t)	\end{array}\right]$, and the matrix $A$ is defined in equation \eqref{eqnn4}. The derivation of these  schemes can be found in \cite{lippold1980mitchell}.

FIGURE \eqref{fig31} and  \eqref{fig32} show the performances of our methods \eqref{6} and \eqref{eqn7.1} in comparison with  finite difference schemes \eqref{eqn8} and \eqref{eqn9} using $k=0.01$ and $h=0.063$. The  implicit FD-(1,1) produces a much better result even for a large value of $r$. When the values of $h$ and $k$  fail to satisfy the stability conditions of the explicit FD-(0,1), it can be seen that the numerical solution became unstable after some time iterations. However, it is interesting to see that even in this case the global numerical solution fails to exist, the local numerical solution does exist for a small time and { it was }very close to the exact solution. 
It is apparent that the explicit finite difference scheme \eqref{eqn8} and \eqref{eqn6} are not stable for large values of $r$. The implicit FD-(1,1) is very stable and produces a much better result { when compared} to the ordinary implicit finite difference scheme \eqref{eqn9}.

\begin{multicols}{2}
\begin{figure}[H]
	\includegraphics[width=.4\textwidth]{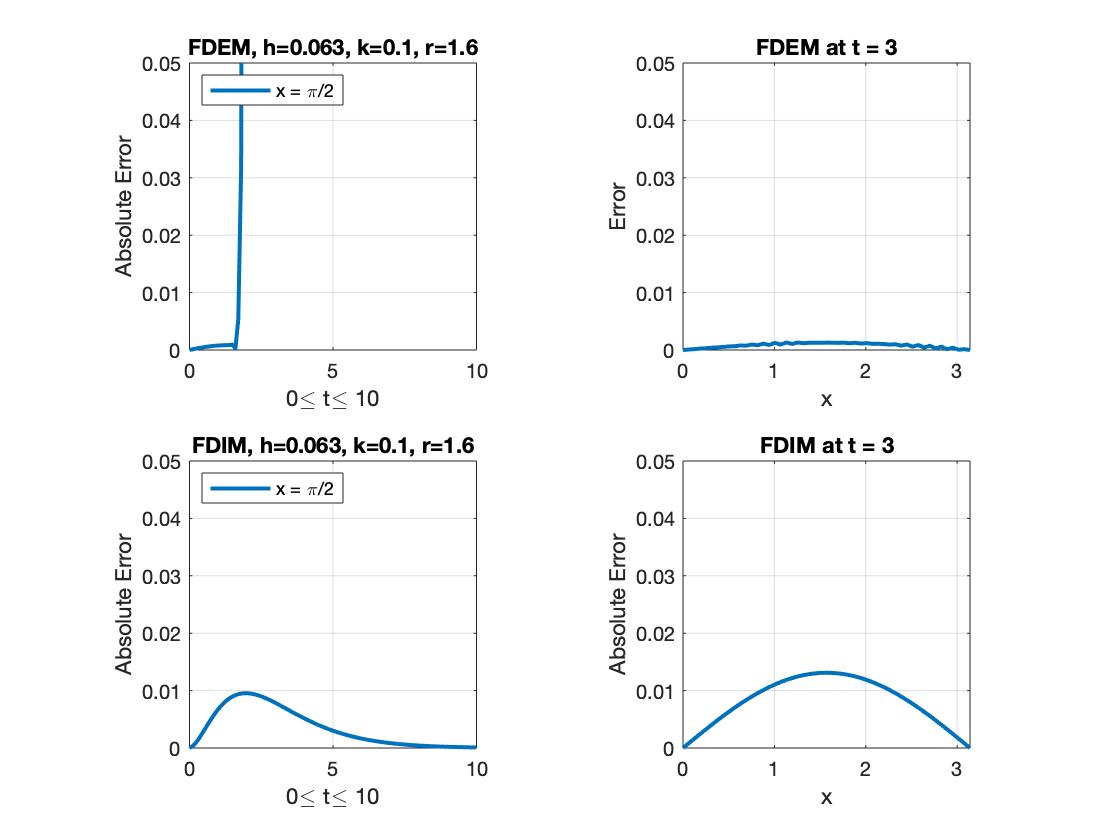}
	\caption{ The absolute error of the method \eqref{6} and \eqref{eqn7.1} for $r=1.5915$.}
	\label{fig31}
\end{figure}\begin{figure}[H]
\includegraphics[width=.4\textwidth]{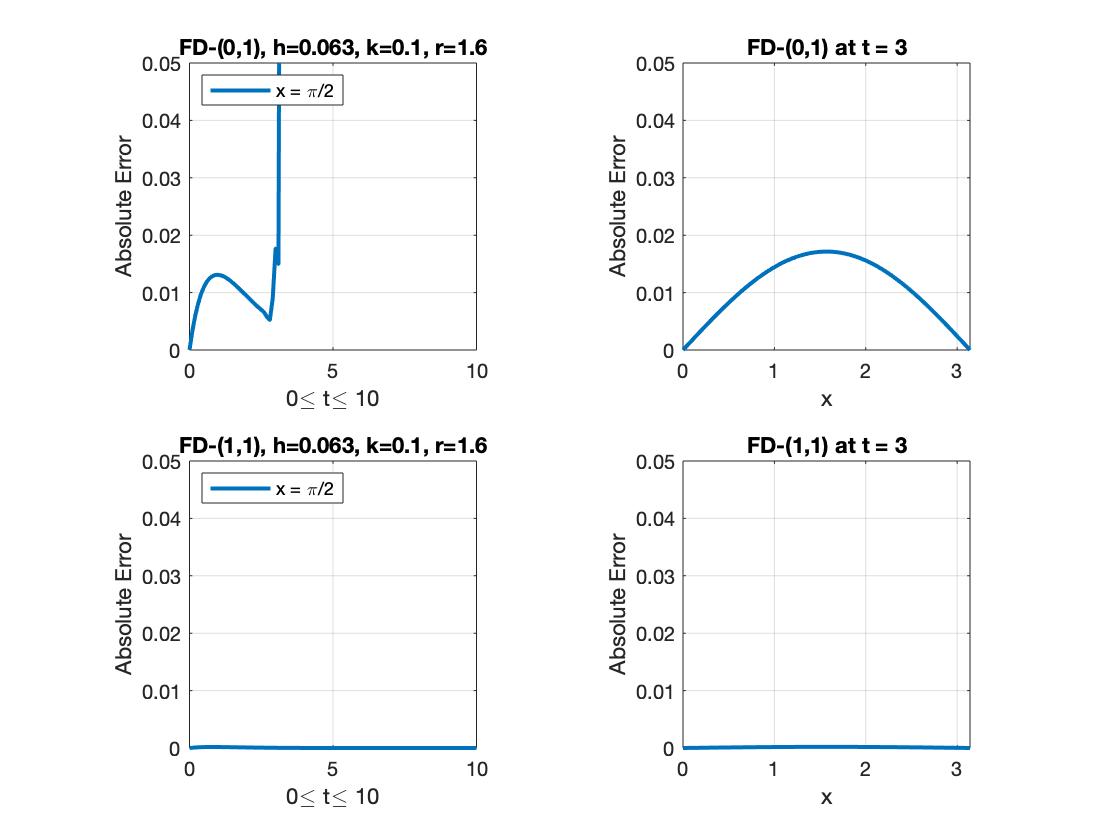}
\caption{ The absolute error of the method \eqref{eqn8} and \eqref{eqn9} for $r=1.5915$.}
\label{fig32}
\end{figure}
\end{multicols}


\begin{figure}[h!]
	\includegraphics[width=1\textwidth]{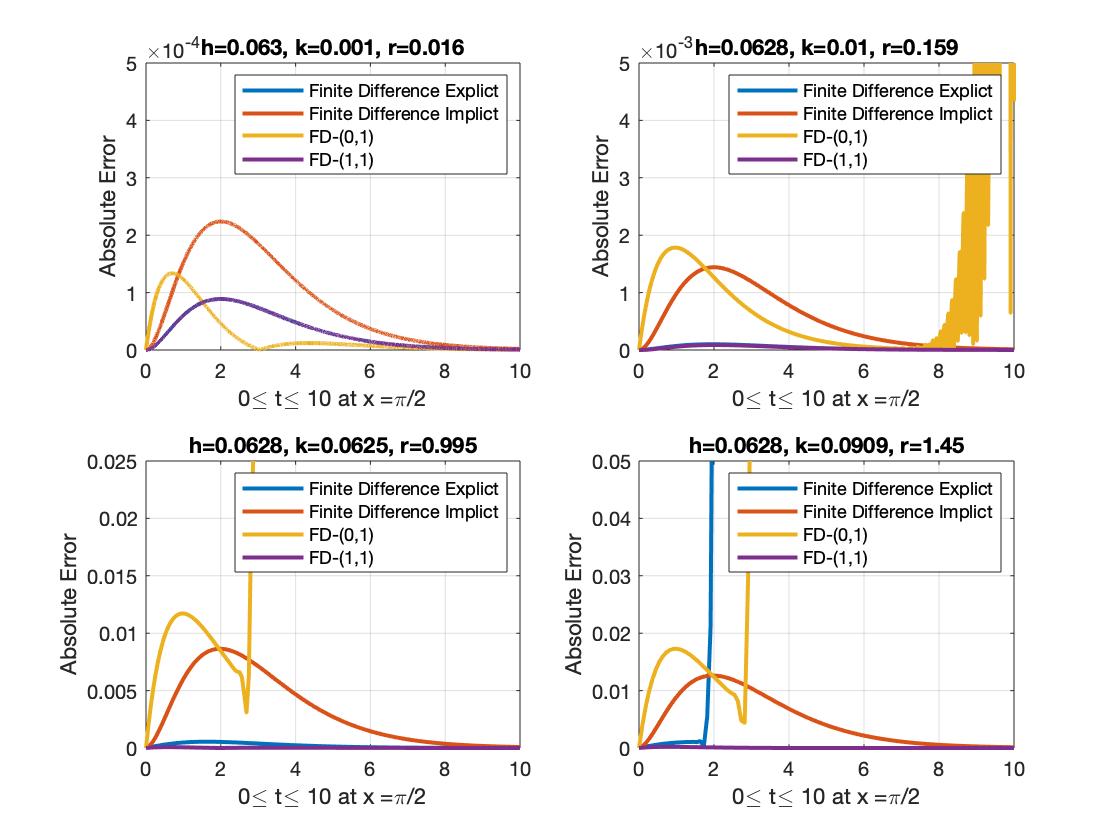}
	\caption{\tiny The absolute error  of the method \eqref{6}, \eqref{eqn7.1}, \eqref{eqn8} and \eqref{eqn9}.}
	\label{fig4}
\end{figure}

In the FIGURE \eqref{fig4}, we plotted the absolute error at the four different values of  $r=.016$,  $r=.159$, $r=.995$, and $r=1.45$. One can see for a small values of $r=0.016$, all the four schemes  produce fairly stable results. This shows that when our explicit finite difference FD-(0,1) satisfies the assumptions of proposition \eqref{thm1.1}, it is stable and produces better results than the other three. However, the performances of the explicit finite difference method \eqref{eqn6} and implicit finite difference FD-(1,1)  \eqref{eqn7.1} are very similar for  small values of $r$.
\begin{table}[h!]
	\centering

	\begin{tabular}{p{2.5cm}p{2.5cm} p{2.5cm} p{2.5cm} p{2.5cm} p{2.5cm} }
		\toprule
			\vskip 1pt	$x$ & 	\vskip 1pt FOCM&	\vskip 1pt OEFD  & 	\vskip 1pt OIFD 	&  	\vskip 1pt EX-(0,1)  & 	\vskip 1pt IM-(1,1)  \\[0.8ex]
		\midrule
			0&0&0	&0	&0&	0	\\[1ex]
		0.314159265&0.00012256&	6.29067E-05&	0.000135485	&0.001494844&	1.23932E-05
	\\[1ex]
		0.628318531&0.00022777&	0.000119656	&0.000257708&	0.002843363	&2.35734E-05
	\\[1ex]
		0.942477796&0.00031458&	0.000164692	&0.000354705&	0.003913553	&3.24459E-05
	\\[1ex]
		1.256637061	&0.00036955&0.000193607	&0.000416981&	0.004600658	&3.81425E-05
	\\[1ex]
		1.570796327	&0.00038865&0.00020357&	0.000438439	&0.004837418&	4.01054E-05
	\\[1ex]
		1.884955592	&0.00036955&0.000193607	&0.000416981	&0.004600658&	3.81425E-05
	\\[1ex]
		2.199114858&0.00031458&	0.000164692	&0.000354705&	0.003913553	&3.24459E-05
	\\[1ex]
		2.513274123	&0.00022777&0.000119656&	0.000257708&	0.002843363	&2.35734E-05
	\\[1ex]
		2.827433388	&0.00012256&6.29067E-05&	0.000135485&	0.001494844	&1.23932E-05
	\\[1ex]
		3.141592654&0&0&	0&	0	&0	\\[1ex]
			\bottomrule
		\end{tabular}
	\vskip 10pt
	\caption{ Absolute Error}
	\end{table}

\begin{table}[h!]
	\centering

	\begin{tabular}[t]{p{2cm}p{3cm}p{3cm}p{3cm}p{3cm}}
		\toprule
	\vskip 1pt	$r$ & 	\vskip 1pt EFD  & 	\vskip 1pt IFD 	&	\vskip 1pt  EX-(0,1)  & 	\vskip 1pt IM-(1,1)  \\[1ex]
		\midrule
		1.59&	1.00967E+34	&0.002547509&	9.08234E+13&	2.231E-06
	\\[1ex]
		0.53&	3.05424E-05&	0.00079153&	2.18322E+11	&1.36036E-05
	\\[1ex]
		0.32&	2.04246E-05	&0.000473008&	3410.243641&	1.43835E-05
	\\[1ex]
		0.23&	1.76452E-05&	0.000339697&	0.011310925	&1.45754E-05
	\\[1ex]
		0.18&	1.64986E-05	&0.000266457&	7.84447E-05&	1.46457E-05	\\[1ex]
	\bottomrule
\end{tabular}
\vskip 10pt
	\caption{Maximum Error at $t=6$}
\end{table}
TABLE 1 shows the comparison between the errors generated by FOCM, OEFD, OIFD, $EX-(0,1)$ and $IM-(1,1)$ at $t=0.3$ with {\color{black}$h=\frac{\pi}{10}$ and $k=\frac{1}{10}$}.

TABLE 2 shows the magnitude of the maximum error at time $t = 6$ between the exact solution and the numerical solution obtained by using FOCM, OEFD, OIFD, $FD-(0,1)$, and $FD-(1,1)$ discussed above with different values of $h$ and $k$.


 \section{Conclusion}
 In this paper, a class of finite difference methods using the $C_0$-semigroup operator  theory  for solving the inhomogeneous damped  wave  equation is presented. The stability and consistency of the implicit and explicit methods are proved.
 Test examples are presented, and the results obtained are compared with the exact solutions. The comparison  certifies that implicit FD-(1,1) gives good results. Summarizing these results, we can say the general form of the new finite difference methods has a reasonable amount of calculations and the form is easy to use.
 All results are obtained by using MATLAB version 9.7.

   \newpage
 \bibliographystyle{plain}

\begin{thebibliography}{10}
	
	\bibitem{baker1981pade}
	GA~Baker and PR~Graves-Morris.
	\newblock Pad{\'e} approximants, part i, encycl. math., vol. 13.
	\newblock {\em Reading, MA: Addison-Wesley}, 7:233--236, 1981.
	
	\bibitem{burq2016exponential}
	Nicolas Burq and Romain Joly.
	\newblock Exponential decay for the damped wave equation in unbounded domains.
	\newblock {\em Communications in Contemporary Mathematics}, 18(06):1650012,
	2016.
	
	\bibitem{christie1976finite}
	Ian Christie, David~F Griffiths, Andrew~R Mitchell, and Olgierd~C Zienkiewicz.
	\newblock Finite element methods for second order differential equations with
	significant first derivatives.
	\newblock {\em International Journal for Numerical Methods in Engineering},
	10(6):1389--1396, 1976.
	
	\bibitem{evans10}
	Lawrence~C. Evans.
	\newblock {\em Partial differential equations}.
	\newblock American Mathematical Society, Providence, R.I., 2010.
	
	\bibitem{gao2007unconditionally}
	Feng Gao and Chunmei Chi.
	\newblock Unconditionally stable difference schemes for a one-space-dimensional
	linear hyperbolic equation.
	\newblock {\em Applied Mathematics and Computation}, 187(2):1272--1276, 2007.
	
	\bibitem{hussain2012fourth}
	M.T. Hussain, A.~Pervaiz, Zainulabadin Zafar, and M.O. Ahmad.
	\newblock Fourth order compact method for one dimensional homogeneous damped
	wave equation.
	\newblock {\em Pakistan Journal of Science}, 64(2):122, 2012.
	
	\bibitem{jury1963roots}
	Eliahu Jury.
	\newblock On the roots of a real polynomial inside the unit circle and a
	stability criterion for linear discrete systems.
	\newblock {\em IFAC Proceedings Volumes}, 1(2):142--153, 1963.
	
	\bibitem{larsson1991finite}
	Stig Larsson, Vidar Thom{\'e}e, and Lars~B Wahlbin.
	\newblock Finite-element methods for a strongly damped wave equation.
	\newblock {\em IMA journal of numerical analysis}, 11(1):115--142, 1991.
	
	\bibitem{lax1956survey}
	Peter~D Lax and Robert~D Richtmyer.
	\newblock Survey of the stability of linear finite difference equations.
	\newblock {\em Communications on pure and applied mathematics}, 9(2):267--293,
	1956.
	
	\bibitem{lumer1961dissipative}
	Gunter Lumer and Ralph~S Phillips.
	\newblock Dissipative operators in a banach space.
	\newblock {\em Pacific Journal of Mathematics}, 11(2):679--698, 1961.
	
	\bibitem{lippold1980mitchell}
	Andrew~Ronald Mitchell and David~Francis Griffiths.
	\newblock The finite difference method in partial differential equations.
	\newblock {\em Wiley. New York}, 1980.
	
	\bibitem{mohebbi2013fourth}
	Akbar Mohebbi.
	\newblock A fourth-order finite difference scheme for the numerical solution of
	1d linear hyperbolic equation.
	\newblock {\em Commun. Numer. Anal}, 2013.
	
	\bibitem{ozer2006one}
	Ahmet~{\"O}zkan {\"O}zer and E~{\.I}nan.
	\newblock One-dimensional wave propagation problem in a nonlocal finite medium
	with finite difference method.
	\newblock In {\em Vibration Problems ICOVP 2005}, 383--388. Springer,
	2006.
	
	\bibitem{rauch1974exponential}
	Jeffrey Rauch, Michael Taylor, and Ralph Phillips.
	\newblock Exponential decay of solutions to hyperbolic equations in bounded
	domains.
	\newblock {\em Indiana university Mathematics journal}, 24(1):79--86, 1974.
	
	\bibitem{richtmyer1994difference}
	Robert~D Richtmyer and Keith~W Morton.
	\newblock Difference methods for initial-value problems.
	\newblock {\em dmiv}, 1994.
	
	\bibitem{samuelson1941conditions}
	Paul~A Samuelson.
	\newblock Conditions that the roots of a polynomial be less than unity in
	absolute value.
	\newblock {\em The Annals of Mathematical Statistics}, 12(3):360--364, 1941.
	
	\bibitem{segur2007waves}
	Harvey Segur.
	\newblock Waves in shallow water, with emphasis on the tsunami of 2004.
	\newblock In {\em Tsunami and nonlinear waves}, 3--29. Springer, 2007.
	
	\bibitem{smith1985numerical}
	Gordon Smith.
	\newblock {\em Numerical solution of partial differential equations: finite
		difference methods}.
	\newblock Oxford university press, 1985.
	
\end{thebibliography}

\end{document}